\def\UseBibLatex{1}
\def\input@path{{styles/}}
\newcommand{\UsePackage}[1]{%
  \IfFileExists{styles/#1.sty}{%
      \usepackage{styles/#1}%
   }{%
      \IfFileExists{../styles/#1.sty}{%
         \usepackage{../styles/#1}%
      }{%
         \usepackage{#1}%
      }%
   }%
}
\theoremstyle{plain}%
\newtheorem{theorem}{Theorem}[section]
\newtheorem{proposition}[theorem]{Proposition}
\theoremstyle{plain}%
\newtheorem*{remark:unnumbered}[theorem]{Remark}%
\newtheorem{remark}[theorem]{Remark}%
\newtheorem{definition}[theorem]{Definition}
\newtheorem{example}[theorem]{Example}
\newcommand{\myqedsymbol}{\rule{2mm}{2mm}}
\theoremstyle{nonumberplain}%
\newtheorem{proof}{Proof:}%
\newcommand{\ben}{\begin{enumerate}}
\newcommand{\een}{\end{enumerate}}
\newcommand{\Z}{\mathbb{Z}}
\newcommand{\A}{\text{Aut}}
\newcommand{\I}{\text{Inn}}
\newcommand{\T}{\text{Transv}}
\providecommand{\emphind}[1]{}%
\renewcommand{\emphind}[1]{\emph{#1}\index{#1}}
\definecolor{blue25emph}{rgb}{0, 0, 11}
\providecommand{\emphic}[2]{}
\renewcommand{\emphic}[2]{\textcolor{blue25emph}{%
      \textbf{\emph{#1}}}\index{#2}}
\providecommand{\emphi}[1]{}%
\renewcommand{\emphi}[1]{\emphic{#1}{#1}}
\definecolor{almostblack}{rgb}{0, 0, 0.3}
\providecommand{\emphw}[1]{}%
\renewcommand{\emphw}[1]{{\textcolor{almostblack}{\emph{#1}}}}%
\providecommand{\emphOnly}[1]{}%
\renewcommand{\emphOnly}[1]{\emph{\textcolor{blue25}{\textbf{#1}}}}
\newcommand{\QuinnThanks}[1]{%
   \thanks{%
      Department of Mathematics; %
      University of California, Davis; %
      One Shields Ave; %
      Davis, CA, 95616, USA; %
      \href{mailto:qjmcmichael@ucdavis.edu}{qjmcmichael@ucdavis.edu}; %
      \url{}. %
   #1%
   }%
}
\newcommand{\PeytonThanks}[1]{%
   \thanks{%
      Department of Mathematics; %
      University of California, Davis; %
      One Shields Ave; %
      Davis, CA, 95616, USA; %
      \href{mailto:ppwood@ucdavis.edu}{ppwood@ucdavis.edu}; %
      \url{https://sites.google.com/view/peytonpwood/}. %
   #1%
   }%
}
\newcommand{\HLink}[2]{\hyperref[#2]{#1~\ref*{#2}}}
\newcommand{\HLinkSuffix}[3]{\hyperref[#2]{#1\ref*{#2}{#3}}}
\providecommand{\deflab}[1]{}
\renewcommand{\deflab}[1]{\label{def:#1}}
\providecommand{\eqlab}[1]{}%
\renewcommand{\eqlab}[1]{\label{equation:#1}}
\newcommand{\remove}[1]{}%
\newlist{compactenumA}{enumerate}{5}%
\setlist[compactenumA]{topsep=0pt,itemsep=-1ex,partopsep=1ex,parsep=1ex,%
   label=(\Alph*)}%
\newlist{compactenuma}{enumerate}{5}%
\setlist[compactenuma]{topsep=0pt,itemsep=-1ex,partopsep=1ex,parsep=1ex,%
   label=(\alph*)}%
\newlist{compactenumI}{enumerate}{5}%
\setlist[compactenumI]{topsep=0pt,itemsep=-1ex,partopsep=1ex,parsep=1ex,%
   label=(\Roman*)}%
\newlist{compactenumi}{enumerate}{5}%
\setlist[compactenumi]{topsep=0pt,itemsep=-1ex,partopsep=1ex,parsep=1ex,%
   label=(\roman*)}%
\newlist{compactitem}{itemize}{5}%
\setlist[compactitem]{topsep=0pt,itemsep=-1ex,partopsep=1ex,parsep=1ex,%
   label=\ensuremath{\bullet}}%
\providecommand{\BibLatexMode}[1]{}
\providecommand{\BibTexMode}[1]{}
  \renewcommand{\BibLatexMode}[1]{}
  \renewcommand{\BibTexMode}[1]{#1}
  \renewcommand{\BibLatexMode}[1]{#1}
  \renewcommand{\BibTexMode}[1]{}
\numberwithin{figure}{section}%
\numberwithin{table}{section}%
\numberwithin{equation}{section}%
\begin{document}

\title{More Automorphism Groups of Quandles}

\author{%
   Quinn J. M. Salix
   \QuinnThanks{}%
   \and%
   Peyton Phinehas Wood%
   \PeytonThanks{}%
}

\date{September 27, 2025}

\maketitle

\begin{abstract}
    We prove that the displacement group of the dihedral quandle with $n$ elements is isomorphic to the group generated by rotations of the $\frac{n}{2}$-gon when $n$ is even and the $n$-gon when $n$ is odd. We additionally show that any quandle with at least one trivial column has equivalent displacement and inner automorphism groups. Then, using a known enumeration of quandles which we confirm up to order 10, we verify the automorphism group and the inner automorphism group of all quandles (up to isomorphism) of orders less than or equal to 7, compute these for all 115,431 quandles orders 8, 9, and 10, and extend these results by computing the displacement group of all 115,837 quandles (up to isomorphism) of order $\le$ 10.
\end{abstract}

    KEYWORDS: quandle, automorphism group, inner automorphism group, displacement group, transvection group, dihedral quandle, dual quandle


\section{Introduction}

    Joyce and Matveev both independently introduced quandles to the mathematical world in the 1980s \cite{Joyce_1982, 1984_Matveev}. The word ``quandle'' was first coined by David Joyce in his 1982 thesis  whereas Matveev's terminology of ``distributive groupoid'' for the same object has not been used as widely.

Quandles are a useful object to employ when studying knots such as in \cite{Carter1999QuandleCA}, \cite{Carter_Jelsovsky_Kamada_Saito_2001}, \cite{Clark_Elhamdadi_Saito_Yeatman_2014}, \cite{Elhamdadi_Nelson_2015}, \cite{Elhamdadi_Nunez_Singh_Swain_2023}, and \cite{Swain_2024}, to name a few, but their algebraic properties are not as often highlighted. The transvection, inner automorphism, and automorphism groups of a quandle were first defined by Joyce in \cite{Joyce_1982}, but they have only begun being systematically computed for finite quandles in the last twenty years. This first group, the transvection group, is now popularly referred to as the \textit{displacement group}, first coined by Hulpke, Stanovský, and Vojtěchovský in \cite{HULPKE2016735}. We will use this convention throughout. In 2005, Ho and Nelson computed the automorphism group of all quandles (up to isomorphism) of order $\le$ 5. In 2010, this was extended to $\le$ 6 by Elhamdadi, MacQuarrie, and Restrepo \cite{AutGroup2010} and again by MacQuarrie to $\le$ 7 in her 2011 thesis \cite{MacQuarrie_2011}. MacQuarrie also computed all inner automorphisms for finite quandles (up to isomorphism) of order $\le$ 7.

In 2019, Vojtěchovský and Yang enumerated all quandles (up to isomorphism) of order $\le$ 13 \cite{Vor_Yang_2019} which we have reconfirmed for orders $\le$ 10. The problem of computing all quandles up to isomorphism is deeply related to computing the automorphism groups of these quandles, so traditionally both have been done together as in \cite{Mat_and_fin_q_ho_nelson_2005} and \cite{AutGroup2010}; however, Vojtěchovský and Yang only gave us the quandles, not their automorphism groups, leaving computing the automorphism groups of quandles of orders 8, 9, 10, 11, 12 and 13 open. In this paper, we compute the automorphism and inner automorphism groups for all 1,581 quandles of order 8, 11,079 quandles of order 9, and 102,771 quandles of order 10.

Nelson and Ho additionally give the automorphism group of the trivial quandle \cite{Mat_and_fin_q_ho_nelson_2005}. In this paper, we give the trivial quandle's inner automorphism and displacement groups, having not found these results already in existing literature. Elhamdadi, MacQuarrie, and Restrepo additionally characterized the automorphism and inner automorphism group for the dihedral quandle \cite{AutGroup2010}.  In this paper, we prove the displacement group of the dihedral quandle with $n$ elements is isomorphic to the group generated by rotations of the $\frac{n}{2}$-gon when $n$ is even and the $n$-gon when $n$ is odd. Cazet proved a characterization for the inner and automorphism groups of one non-trivial column quandles in \cite{cazet2024quandlesnontrivialcolumn}. In this paper, we show the displacement group of one non-trivial column quandles is equal to its inner automorphism group, extending Cazet's result. Furthermore, we show that for all quandles with at least one trivial column the inner automorphism and displacement group are equal. 

In Section 2 we review common quandle theoretic definitions especially the displacement (or transvection) group, inner automorphism group, and automorphism group of a quandle, then provide several examples of families of quandles that we explore later in this paper. In Section 3 we highlight several known results of other authors that are relevant to the questions we answer.

In Section 4 we share several new results concerning the displacement, inner automorphism, and automorphism groups of quandles. To the authors of this paper's knowledge this is the first paper giving significant attention to computing the displacement group of small finite quandles. Theorem \ref{MainResult}, fully classifies the displacement group of dihedral quandles.

In Section 5 we explain how we compute the displacement group for all quandles (up to isomorphism) of order $\le$ 10; reconfirm the results of \cite{Mat_and_fin_q_ho_nelson_2005}, \cite{AutGroup2010}, and \cite{MacQuarrie_2011}; and extend these results to orders 8, 9 and 10. We summarize some of these computations in a series of tables in Appendix A.


\section{Review of Quandles}

\begin{definition}[Quandle, \cite{Elhamdadi_Nelson_2015}]
    A {\em quandle}, $(X,\triangleright)$, is a set X with binary operation $\triangleright : X \times X \to X$ satisfying:
    \begin{enumerate}
        \item Q1 (Idempotency): For all x $\in$ X, $x \triangleright x = x$.
        \item Q2 (Invertibility): For all y $\in$ X, the map $\beta_y : X \to X$ defined by $\beta_y(x)=x\triangleright y$ is invertible. We denote $\beta^{-1}_y(x)$ as $x \triangleright^{-1} y$. 
        \item Q3 (Self-Distributivity): For all x, y, z $\in$ X, $(x \triangleright y) \triangleright z = (x \triangleright z) \triangleright (y \triangleright z) $.
    \end{enumerate}
    \end{definition}

    Q1, Q2, and Q3 are known as the {\em quandle axioms}. A quandle is called \textit{finite} if $|X| \in \mathbb{Z}$. We say $n=|X|$ is the \textit{order} of a quandle. Finite quandles have been enumerated (up to isomorphism) for order $n \le$ 13 \cite{Vor_Yang_2019}.

\begin{definition}[Presentation Matrix of a Finite Quandle]
Let $(X,\triangleright)$ be a quandle of order $n$. The $n \times n $ matrix
\[ M_X = \begin{bmatrix} x_1 \triangleright x_1 & x_1 \triangleright x_2& \dots &x_1 \triangleright x_3\\
x_2 \triangleright x_1 & x_2 \triangleright x_2& \dots &x_1 \triangleright x_n\\
\vdots & \vdots & \ddots & \vdots\\
x_n \triangleright x_1 & x_n \triangleright x_2& \dots & x_n \triangleright x_n \end{bmatrix}\]
is called the \textit{presentation matrix of X}.
    
\end{definition}

We choose to denote 0 in the following quandles with $n$.

\begin{example}
    The \textit{Trivial Quandle of order n}, denoted $T_n$: For any $n\in\Z$, let $X=\mathbb{Z}_n $ and $\forall x, y \in X$, $x \triangleright y = x$ .
\end{example}

\begin{figure}[h]
    \centering
    $\begin{bmatrix}1&1\\2&2\end{bmatrix}$
    \hspace{.5in}
    $\begin{bmatrix}1&1&1\\2&2&2\\3&3&3\end{bmatrix}$
    \caption{Presentation Matrices for $T_2$ (left) and $T_3$ (right) }
\end{figure}

\begin{example}
    The \textit{Dihedral Quandle of order n}, denoted $R_n$: For any $n\in\Z$, let $X=\mathbb{Z}_n $ and $\forall x, y \in X$, $x \triangleright y = 2y-x \mod{n}$. 
\end{example}

\begin{figure}[h]
    \centering
    $\begin{bmatrix}1&3&2\\3&2&1\\2&1&3\end{bmatrix}$
    \hspace{.5in}
    $\begin{bmatrix}1&3&1&3\\4&2&4&2\\3&1&3&1\\2&4&2&4\end{bmatrix}$
    \caption{Presentation Matrices for $R_3$ (left) and $R_4$ (right)}
\end{figure}

\begin{example}
    An \textit{Alexander Quandle of order n}: Pick $n \in \Z$ and choose $t \in \Z_n$ such that $\gcd(t,n)=1$. Then $x \triangleright y = (1-t)y+tx$ is an order n quandle.
\end{example}

\begin{example}
    The \textit{conjugation quandle of $G$}, denoted Conj($G$): Let $G$ be any group, then $(G,\triangleright)$ is a quandle when $\forall a, b \in G$, $a \triangleright b = bab^{-1}$. 
\end{example}

\begin{example}
    The \textit{core quandle of G}, denoted Core($G$): Let $G$ be any group, then $(G,\triangleright)$ is a quandle when $\forall a, b \in G$, $a \triangleright b = ba^{-1}b$. 
\end{example}

\begin{example}
    The \textit{Takasaki quandle of A}, denoted $T(A)$: Let $A$ be any additive abelian group, then $(A,\triangleright)$ is a quandle when $\forall a, b \in A$, $a \triangleright b = 2b - a $.
\end{example}

\begin{remark}
    When $A$ is abelian, Core($A$) = $T(A)$. Additionally, choosing $A=\Z_n$ for Takasaki quandles yields the dihedral quandle, $R_n$.
\end{remark}

\begin{example}
    A one non-trivial column quandle, denoted $P_{n}^{\sigma}$: Let X=$\Z_n$ and let $\sigma \in S_n$ such that $\sigma(0)=0$. Then the function 

    $$ x \triangleright y = 
    \begin{cases}
    \sigma(x) & y=0\\
    x & y \ne 0\\
    \end{cases}$$

    is a quandle.
    
\end{example}

\begin{figure}[h]
    \centering
    $\begin{bmatrix}1&1&1\\3&2&2\\2&3&3\end{bmatrix}$
    \hspace{.5in}
    $\begin{bmatrix}1&1&1&1\\4&2&2&2\\2&3&3&3\\3&4&4&4\end{bmatrix}$
    \caption{Presentation Matrices for $P_3^{(23)}$ (left) and $P_4^{(432)}$ (right)}
\end{figure}

A \textit{quandle homomorphism} is a function $f : (X, \triangleright_1) \to (Y, \triangleright_2)$ such that $f(x \triangleright_1 y) = f(x) \triangleright_2 f(y)$. We also have the expected notion of a quandle isomorphism and a quandle automorphism \cite{Elhamdadi_Nelson_2015}.

\begin{example}
    The map $\beta_z$ for $z \in X$ defined by Q2 is a quandle homomorphism due to Q3. We can write Q3 in terms of $\beta$'s as follows: $\beta_z(x \triangleright y) = \beta_z(x) \triangleright \beta_z(y)$. Furthermore, it is also a quandle isomorphism and automorphism since Q2 gives that $\beta_z$ is a bijection from X to itself.
\end{example}

\begin{definition}[Automorphism Group of a Quandle]
    Let Aut($X$) denote the group of all automorphisms of the quandle $X$. That is, \[ \text{Aut}(X) \cong \{\sigma \in S_n: \sigma(X) = X\}.\]
\end{definition}

\begin{definition}[Inner Automorphism Group of a Quandle]
    Let Inn($X$) denote the group of all inner automorphisms of the quandle $X$. This is the group generated by permutations corresponding to $\beta_y$ for all $y \in X$ since Q1 requires each $\beta_y$ fix at least one point. That is, Inn($X$) = $\langle \beta_y : y \in X \rangle$.
\end{definition}

\begin{definition}[Displacement Group of a Quandle, \cite{Joyce_1982}]
    Let Dis($X$) or Transv($X$) denote the group of automorphisms of the form $\beta_{x_1}^{e_1}\beta_{x_2}^{e_2} \cdots \beta_{x_n}^{e_n}$ such that $e_1 + e_2 + \cdots + e_n = 0$. This is the group generated by  permutations corresponding to $\beta_x\beta_y^{-1}$ for all $x, y \in X$. That is, Dis($X$) = $\langle \beta_x\beta_y^{-1} : x, y \in X \rangle$.
\end{definition}

\begin{definition}[Affine Group of $\Z_n$]
    The affine group consists of all functions of the form $ax+b$ where $a,b \in \Z_n$ and $\gcd(a,n)=1$. These represent all affine transformations over $\Z_n$.
\end{definition}


\section{Some Known Results Involving Dis(X), Inn(X), and Aut(X)}

Throughout these results, let $X$ be a quandle (not necessarily finite), let $n$ be the order of $X$ when $X$ is finite, let $T_n$ denote the trivial quandle of order $n$, and let $R_n$ denote the dihedral quandle of order $n$.

\begin{proposition}
    When $X$ is finite, Dis($X$), Inn($X$), and Aut($X$) are all isomorphic to subgroups of the symmetric group, $S_n$.
\end{proposition}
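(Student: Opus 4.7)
The plan is to show the chain of containments $\mathrm{Dis}(X) \le \mathrm{Inn}(X) \le \mathrm{Aut}(X) \le S_n$, so that all three groups embed into $S_n$.

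First I would handle $\mathrm{Aut}(X)$. Since $X$ has $n$ elements, fix a bijection $X \to \{1,\ldots,n\}$. Every quandle automorphism is, by definition, a bijection $X \to X$, so after transporting along this labeling each automorphism is an element of $S_n$. Composition of automorphisms corresponds to composition of permutations, and the identity map is the identity permutation, so the inclusion $\mathrm{Aut}(X) \hookrightarrow S_n$ is a group homomorphism. It is injective because distinct automorphisms act differently on $X$. Hence $\mathrm{Aut}(X)$ is isomorphic to a subgroup of $S_n$.

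Next I would address $\mathrm{Inn}(X)$. By the example immediately preceding the definition, each $\beta_y$ is a quandle automorphism of $X$ (Q2 gives bijectivity, and Q3 gives the homomorphism property in the form $\beta_z(x\triangleright y) = \beta_z(x) \triangleright \beta_z(y)$). Therefore the generating set of $\mathrm{Inn}(X)$ lies in $\mathrm{Aut}(X)$, and so $\mathrm{Inn}(X) \le \mathrm{Aut}(X) \le S_n$. Finally, $\mathrm{Dis}(X)$ is generated by elements of the form $\beta_x \beta_y^{-1}$, each of which is a composition of elements of $\mathrm{Inn}(X)$, so $\mathrm{Dis}(X) \le \mathrm{Inn}(X)$ and the full chain follows.

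There is no real obstacle here; the statement is essentially a bookkeeping check that (i) the definitions make each group consist of bijections of an $n$-element set, and (ii) the generators of $\mathrm{Inn}(X)$ and $\mathrm{Dis}(X)$ genuinely preserve the quandle operation, which was already observed in the excerpt. The only thing to be slightly careful about is making explicit that the map sending an automorphism to its induced permutation under a chosen labeling is a group homomorphism that is injective, so the word \emph{isomorphic} in the statement is justified rather than merely a set-theoretic inclusion.
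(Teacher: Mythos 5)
Your argument is correct, and it is the standard one: the paper itself states this proposition without proof, treating it as immediate from the definitions, and your chain $\mathrm{Dis}(X) \le \mathrm{Inn}(X) \le \mathrm{Aut}(X) \le S_n$ (with $\mathrm{Aut}(X)$ embedding in $S_n$ via a labeling of the $n$ elements, and the generators $\beta_y$ and $\beta_x\beta_y^{-1}$ being automorphisms by Q2 and Q3) is exactly the justification the paper implicitly relies on. Your extra care in noting that the labeling induces an injective group homomorphism, so that ``isomorphic to a subgroup'' is literally earned, is a reasonable touch and introduces no gap.
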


The next three results were given by Joyce without proof in his PhD thesis in a section he devotes to groups of automorphisms on quandles.

\begin{proposition}[Section 5 in \cite{Joyce_1982}]
    Inn($X$) is a normal subgroup of Aut($X$).
\end{proposition}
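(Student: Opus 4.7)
The plan is to show directly that $\mathrm{Inn}(X)$ is closed under conjugation by elements of $\mathrm{Aut}(X)$. Since $\mathrm{Inn}(X)$ is generated by the maps $\beta_y$ for $y \in X$, it suffices to verify that $\varphi\,\beta_y\,\varphi^{-1}$ lies in $\mathrm{Inn}(X)$ for every $\varphi \in \mathrm{Aut}(X)$ and every $y \in X$; the general case then follows by writing any $g \in \mathrm{Inn}(X)$ as a finite product of $\beta_y$'s and their inverses and conjugating factor by factor.

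The key step, and really the heart of the proof, is the conjugation identity
\[
\varphi\,\beta_y\,\varphi^{-1} \;=\; \beta_{\varphi(y)}.
\]
First I would verify this pointwise: for any $x \in X$,
\[
\bigl(\varphi\,\beta_y\,\varphi^{-1}\bigr)(x) \;=\; \varphi\bigl(\varphi^{-1}(x)\triangleright y\bigr) \;=\; \varphi\bigl(\varphi^{-1}(x)\bigr)\triangleright \varphi(y) \;=\; x \triangleright \varphi(y) \;=\; \beta_{\varphi(y)}(x),
\]
where the middle equality uses that $\varphi$ is a quandle homomorphism. Since $\varphi(y) \in X$, the map $\beta_{\varphi(y)}$ is by definition in $\mathrm{Inn}(X)$.

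From here the proof finishes formally. For a general $g = \beta_{y_1}^{e_1}\cdots \beta_{y_k}^{e_k} \in \mathrm{Inn}(X)$, insert $\varphi^{-1}\varphi$ between adjacent factors to obtain
\[
\varphi g \varphi^{-1} \;=\; \bigl(\varphi \beta_{y_1} \varphi^{-1}\bigr)^{e_1}\cdots \bigl(\varphi \beta_{y_k} \varphi^{-1}\bigr)^{e_k} \;=\; \beta_{\varphi(y_1)}^{e_1}\cdots \beta_{\varphi(y_k)}^{e_k},
\]
which is again a product of generators (and their inverses) of $\mathrm{Inn}(X)$, hence lies in $\mathrm{Inn}(X)$. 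Thus $\varphi\,\mathrm{Inn}(X)\,\varphi^{-1} \subseteq \mathrm{Inn}(X)$ for every $\varphi \in \mathrm{Aut}(X)$, proving normality.

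There is essentially no obstacle here beyond unpacking definitions carefully; the only thing one must be careful about is applying the homomorphism property of $\varphi$ to the expression $\varphi^{-1}(x)\triangleright y$ in the correct direction. Once the conjugation identity $\varphi\,\beta_y\,\varphi^{-1} = \beta_{\varphi(y)}$ is established, normality is immediate.
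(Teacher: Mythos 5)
Your proof is correct and complete: the pointwise computation establishing $\varphi\,\beta_y\,\varphi^{-1}=\beta_{\varphi(y)}$ is exactly the key identity needed, and conjugating a word in the generators factor by factor then gives $\varphi\,\mathrm{Inn}(X)\,\varphi^{-1}\subseteq \mathrm{Inn}(X)$ for every $\varphi\in\mathrm{Aut}(X)$, which suffices for normality since it applies to $\varphi^{-1}$ as well. Note that the paper itself states this proposition without proof, citing Section 5 of Joyce's thesis, so there is no in-paper argument to compare against; your argument is the standard one and fills that gap correctly.
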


\begin{proposition}[Section 5 in \cite{Joyce_1982}]
    Dis($X$) is a normal subgroup of Aut($X$).
\end{proposition}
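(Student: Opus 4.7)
The plan is to exploit the generating description Dis($X$) = $\langle \beta_x \beta_y^{-1} : x,y \in X\rangle$ and show that conjugation by an arbitrary automorphism $\phi \in \text{Aut}(X)$ permutes this generating set. If that holds, then $\phi \,\text{Dis}(X)\, \phi^{-1} \subseteq \text{Dis}(X)$ for every $\phi$, which is exactly normality.

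First I would establish the key identity that for any $\phi \in \text{Aut}(X)$ and any $y \in X$,
\[
\phi \circ \beta_y \circ \phi^{-1} = \beta_{\phi(y)}.
\]
This is the central step, and it follows directly from $\phi$ being a quandle homomorphism: for any $x \in X$, applying $\phi$ to $\phi^{-1}(x) \triangleright y$ gives $\phi(\phi^{-1}(x)) \triangleright \phi(y) = x \triangleright \phi(y) = \beta_{\phi(y)}(x)$. I expect this to be the main (and really only) substantive step; everything afterward is formal group theory. Note Q2 guarantees $\beta_y^{-1}$ exists as a map, and this same identity applied to inverses gives $\phi \beta_y^{-1} \phi^{-1} = \beta_{\phi(y)}^{-1}$.

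Next, I would use the identity to handle an arbitrary generator. Given $\phi \in \text{Aut}(X)$ and $x, y \in X$,
\[
\phi \,(\beta_x \beta_y^{-1})\, \phi^{-1} = (\phi \beta_x \phi^{-1})(\phi \beta_y^{-1} \phi^{-1}) = \beta_{\phi(x)} \beta_{\phi(y)}^{-1},
\]
which is again a generator of Dis($X$). Extending multiplicatively, any finite product of such generators is sent to another product of generators, so $\phi \,\text{Dis}(X)\, \phi^{-1} \subseteq \text{Dis}(X)$. Since this holds for every $\phi \in \text{Aut}(X)$ (in particular for $\phi^{-1}$), we actually obtain equality, giving normality.

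The only mildly delicate point is confirming that the result does not depend on $X$ being finite: the generating description of Dis($X$) is stated in terms of arbitrary (possibly infinite) products, but conjugation still distributes over the product of two $\beta$'s, so the argument goes through verbatim. I do not anticipate any real obstacle beyond cleanly stating the conjugation identity; the structure of the proof parallels the standard argument that a subgroup generated by a conjugation-invariant set is normal.
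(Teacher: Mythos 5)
Your argument is correct: the conjugation identity $\phi \beta_y \phi^{-1} = \beta_{\phi(y)}$ is exactly the right key step, and since conjugation by any $\phi \in \mathrm{Aut}(X)$ then permutes the generating set $\{\beta_x\beta_y^{-1} : x,y \in X\}$, the subgroup Dis($X$) they generate is normal; your remark that finiteness of $X$ plays no role is also accurate. Note that the paper itself states this proposition without proof (it is one of the results attributed to Section 5 of Joyce's thesis), so there is no internal proof to compare against --- your write-up supplies the standard argument and fills that gap correctly.
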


\begin{proposition}[Section 5 in \cite{Joyce_1982}]\label{prop:transv_subset_inn}
    Dis($X$) is a normal subgroup of Inn($X$).
\end{proposition}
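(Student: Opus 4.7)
The plan is to verify normality directly by showing that Dis($X$) is closed under conjugation by each of the generators $\beta_z$ of Inn($X$). Once this is done, because the conjugation map $g \mapsto \beta_z g \beta_z^{-1}$ is a group homomorphism and the $\beta_z$'s generate Inn($X$), conjugation by an arbitrary element of Inn($X$) will also preserve Dis($X$), giving normality.

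The key computational step, which I would set up first, is the identity
\[
\beta_z \, \beta_y \, \beta_z^{-1} \;=\; \beta_{\beta_z(y)} \qquad \text{for all } y,z \in X.
\]
This is a direct rewriting of the self-distributivity axiom Q3. Indeed, $(x \triangleright y) \triangleright z = (x \triangleright z) \triangleright (y \triangleright z)$ says $\beta_z(\beta_y(x)) = \beta_{\beta_z(y)}(\beta_z(x))$ for every $x \in X$, which is exactly $\beta_z \beta_y = \beta_{\beta_z(y)} \beta_z$.

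Next, I would apply this identity to a typical generator $\beta_x \beta_y^{-1}$ of Dis($X$):
\[
\beta_z \bigl(\beta_x \beta_y^{-1}\bigr) \beta_z^{-1}
\;=\; \bigl(\beta_z \beta_x \beta_z^{-1}\bigr)\bigl(\beta_z \beta_y \beta_z^{-1}\bigr)^{-1}
\;=\; \beta_{\beta_z(x)}\, \beta_{\beta_z(y)}^{-1}.
\]
Since $\beta_z(x), \beta_z(y) \in X$, this conjugate is again one of the defining generators of Dis($X$), hence lies in Dis($X$). Since Dis($X$) $\subseteq$ Inn($X$) (its generators are already products of elements of Inn($X$)), this shows each generator $\beta_z$ of Inn($X$) normalizes Dis($X$), so Dis($X$) $\triangleleft$ Inn($X$).

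There is no real obstacle here; essentially the only subtlety is being careful in translating Q3 into the commutation identity for the $\beta$'s. As a sanity check, one may also observe that the statement is a formal consequence of \HLink{Proposition}{prop:transv_subset_inn}'s predecessor: since Dis($X$) is already normal in the larger group Aut($X$), it is \emph{a fortiori} normal in any subgroup lying between it and Aut($X$), and in particular in Inn($X$). I would probably give the direct argument anyway, since it reuses the same $\beta_z \beta_y \beta_z^{-1} = \beta_{\beta_z(y)}$ identity that underlies all three of Joyce's propositions in this block.
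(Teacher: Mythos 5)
Your argument is correct. Note, however, that the paper does not actually prove this proposition: it is one of three results quoted from Section 5 of Joyce's thesis without proof, so your direct verification is supplying details the paper omits rather than paralleling an in-paper argument. The key identity $\beta_z\beta_y\beta_z^{-1}=\beta_{\beta_z(y)}$ is indeed an immediate rewriting of Q3, and conjugating the generator $\beta_x\beta_y^{-1}$ gives $\beta_{\beta_z(x)}\beta_{\beta_z(y)}^{-1}$, again a generator of Dis($X$). One small point of rigor: a general element of Inn($X$) is a word in the $\beta_z$ \emph{and their inverses}, so to pass from ``each generator $\beta_z$ conjugates Dis($X$) into itself'' to normality you should also record that $\beta_z^{-1}\bigl(\beta_x\beta_y^{-1}\bigr)\beta_z=\beta_{\beta_z^{-1}(x)}\beta_{\beta_z^{-1}(y)}^{-1}$, which follows from the same identity with $\beta_z^{-1}(y)$ in place of $y$ (in the finite case one can instead argue that the containment $\beta_z\,\mathrm{Dis}(X)\,\beta_z^{-1}\subseteq\mathrm{Dis}(X)$ is automatically an equality). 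Your closing remark is also valid: since the preceding proposition asserts Dis($X$) is normal in Aut($X$), normality in the intermediate subgroup Inn($X$) follows formally, though the direct computation is the more self-contained route and is presumably the one Joyce had in mind.
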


One popular idea is to classify $\T(X), \I(X), \text{and } \A(X)$ for known types of quandles. For example, Nelson and Ho give the below classification of the automorphism group of the trivial quandle.

\begin{proposition}[Example 1 in \cite{Mat_and_fin_q_ho_nelson_2005}]
    The automorphism group of $T_n$ is isomorphic to $S_n$.
\end{proposition}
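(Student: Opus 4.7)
The plan is to exploit the triviality of the quandle operation to show that every bijection of the underlying set is automatically a quandle homomorphism, so that $\text{Aut}(T_n)$ is as large as possible. First, I would recall that by the preceding proposition, $\text{Aut}(T_n)$ embeds into $S_n$, which gives the upper bound $|\text{Aut}(T_n)| \le n!$. It then suffices to establish the reverse containment by producing an injection $S_n \hookrightarrow \text{Aut}(T_n)$, or equivalently, by showing every permutation of the underlying set $\mathbb{Z}_n$ is a quandle automorphism of $T_n$.

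Next, I would take an arbitrary bijection $\sigma \in S_n$ acting on the underlying set of $T_n$ and verify the homomorphism condition $\sigma(x \triangleright y) = \sigma(x) \triangleright \sigma(y)$. By the definition of $T_n$, the left-hand side is $\sigma(x \triangleright y) = \sigma(x)$, and the right-hand side is $\sigma(x) \triangleright \sigma(y) = \sigma(x)$ as well. Hence the homomorphism condition holds trivially, and since $\sigma$ is a bijection, it is a quandle automorphism. This step is essentially immediate from the definition of $T_n$, so no obstacles appear here.

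Finally, I would conclude by observing that the map $S_n \to \text{Aut}(T_n)$ sending a permutation of $\mathbb{Z}_n$ to itself viewed as a quandle automorphism is clearly an injective group homomorphism (composition of bijections agrees with composition of the induced automorphisms). Combined with the containment $\text{Aut}(T_n) \hookrightarrow S_n$, this yields $\text{Aut}(T_n) \cong S_n$. The main obstacle, to the extent there is one, is simply making sure the direction of the argument is clean: the content of the proof is entirely in the observation that the triviality of $\triangleright$ makes the homomorphism axiom vacuous, so the proof is essentially a one-line check once set up carefully.
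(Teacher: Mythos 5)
Your proof is correct: since $x \triangleright y = x$ in $T_n$, every bijection of the underlying set trivially satisfies the homomorphism condition, which is exactly the standard argument. The paper itself states this result without proof (citing Example 1 of Ho and Nelson), so there is nothing to compare beyond noting that your verification is the expected one.
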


In a 2010 paper and 2011 Master thesis, MacQuarrie, Elhamdadi, and Restrepo give two results for Dihedral quandles and one for Conjugation quandles.

\begin{theorem}[Thm 2.1 in \cite{AutGroup2010}]
    The automorphism group of $R_n$ is isomorphic to the affine group Aff($\Z_n$).
\end{theorem}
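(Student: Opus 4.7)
The plan is to construct an explicit isomorphism $\Phi : \text{Aff}(\Z_n) \to \text{Aut}(R_n)$ that sends the affine data $(a,b)$ with $\gcd(a,n) = 1$ to the map $f_{a,b}(x) = ax + b$, viewed as a permutation of $\Z_n$. The theorem then breaks into two directions: showing every $f_{a,b}$ really is a quandle automorphism, and showing conversely that every automorphism is of this form.

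For the forward direction, a direct computation yields
\[
f_{a,b}(x \triangleright y) = a(2y-x) + b = 2(ay+b) - (ax+b) = f_{a,b}(x) \triangleright f_{a,b}(y),
\]
so $f_{a,b}$ preserves $\triangleright$; bijectivity on $\Z_n$ is equivalent to $\gcd(a,n) = 1$. A short composition calculation confirms that $\Phi$ is a group homomorphism, and injectivity follows by evaluating $f_{a,b}$ at $0$ and $1$.

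The harder direction is surjectivity. I would begin by observing that every translation $t_c(x) = x + c$ is itself an automorphism of $R_n$ (a one-line check using $2y - x$), so given $\phi \in \text{Aut}(R_n)$ I may replace it by $t_{-\phi(0)} \circ \phi$ and assume $\phi(0) = 0$. Set $a := \phi(1)$. The key structural observation is that $R_n$ is generated as a quandle by $\{0,1\}$, because the identity $(k-1) \triangleright k = 2k - (k-1) = k+1$ iteratively produces $2, 3, \ldots, n-1$ starting from $\{0, 1\}$. I would then prove by induction on $k$ that $\phi(k) = ak$: the base cases $k=0,1$ hold by construction, and the inductive step is
\[
\phi(k+1) = \phi((k-1) \triangleright k) = 2\phi(k) - \phi(k-1) = 2ka - (k-1)a = (k+1)a.
\]
Since $\phi$ is a bijection on $\Z_n$, we must have $\gcd(a,n) = 1$, and undoing the normalization shows the original $\phi$ lies in the image of $\Phi$.

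The main obstacle I expect is this surjectivity argument, and within it the fact that $R_n$ is generated by $\{0,1\}$ under $\triangleright$. The induction itself is brief, but it rests on this generation property, which should be stated and justified before the induction so that the argument goes through uniformly for every $n$ without separate case analysis on parity or on the divisors of $n$.
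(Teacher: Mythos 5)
The paper itself offers no proof of this statement: it is quoted as Theorem 2.1 of \cite{AutGroup2010}, so there is no internal argument to compare against, and your proof stands on its own. It is correct and complete: the check that each $f_{a,b}$ with $\gcd(a,n)=1$ preserves $\triangleright$ is routine, and surjectivity follows because any automorphism $\phi$ satisfies $\phi(k+1)=\phi\bigl((k-1)\triangleright k\bigr)=2\phi(k)-\phi(k-1)$, so a two-step induction gives $\phi(k)=\phi(0)+k\bigl(\phi(1)-\phi(0)\bigr)$ and bijectivity forces $\phi(1)-\phi(0)$ to be a unit mod $n$. One small remark: the recurrence already determines $\phi$ on every element, so the generation-by-$\{0,1\}$ lemma you single out as the main obstacle need not be isolated as a separate statement; and, as you note, the argument is uniform in $n$, which matters because the even case has $2$-torsion and is not covered by the Takasaki-quandle result of Bardakov--Dey--Singh cited in Section 3.
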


\begin{theorem}[Thm 2.3 in \cite{AutGroup2010}]
    The inner automorphism group of $R_n$ is isomorphic to the dihedral group $D_{\frac{n}{2}}$ when $n$ is even and $D_n$ when $n$ is odd.
\end{theorem}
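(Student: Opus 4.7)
The plan is to exploit the explicit affine formula $\beta_y(x) = 2y - x \pmod{n}$ that the dihedral operation provides, realizing $\I(R_n)$ as a subgroup of the affine group Aff$(\Z_n)$. The first computation is the two-fold composition
\[
\beta_y \beta_z(x) = 2y - (2z - x) = x + 2(y-z) \pmod{n},
\]
which shows that products of an even number of generators are translations by elements of $2\Z_n$. A further composition yields $\beta_y \beta_z \beta_w(x) = 2(y-z+w) - x$, which is again of the form $\beta_c$, so by induction every word in the $\beta_y$'s reduces either to a translation in $\langle T_2 \rangle$ (where $T_a$ denotes $x \mapsto x + a$) or to a reflection $\beta_c$. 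Consequently $|\I(R_n)| \le n + |\langle T_2 \rangle|$, and $\I(R_n)$ sits inside the semidirect product $\langle T_2 \rangle \rtimes \Z/2\Z$.

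Splitting by parity, when $n$ is odd, $\gcd(2,n)=1$, so $\langle T_2 \rangle = \Z_n$ has order $n$, and the reflections $\beta_0, \ldots, \beta_{n-1}$ are pairwise distinct, giving $|\I(R_n)| = 2n$. Taking $r := \beta_1 \beta_0 = T_2$ and $s := \beta_0$, one checks $r^n = 1$, $s^2 = 1$, and $s r s = r^{-1}$, yielding the standard presentation of $D_n$. When $n$ is even, the identity $2(y + n/2) \equiv 2y \pmod{n}$ forces $\beta_y = \beta_{y+n/2}$, collapsing the reflections down to $n/2$ distinct maps; simultaneously $T_2$ has order $n/2$ in $\Z_n$, so $\langle T_2 \rangle$ has $n/2$ elements, and $|\I(R_n)| = n$. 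The same choice $r = T_2$, $s = \beta_0$ now satisfies $r^{n/2} = 1$, $s^2 = 1$, $s r s = r^{-1}$, identifying $\I(R_n) \cong D_{n/2}$.

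The main obstacle is the even case: one must spot the coincidence $\beta_y = \beta_{y+n/2}$ to avoid overcounting reflections, and verify that no additional translations are produced beyond $2\Z_n$. Both facts are routine once observed, but the careful bookkeeping is essential to arrive at $D_{n/2}$ rather than the naive (and incorrect) $D_n$. Everything else amounts to exhibiting the dihedral presentation inside Aff$(\Z_n)$, which drops out of the affine computations above.
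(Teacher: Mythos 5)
Your argument is correct: this theorem is quoted in the paper from \cite{AutGroup2010} without a proof of its own, and your affine-map computation (even-length words in the $\beta_y$ give translations by elements of $2\Z_n$, odd-length words give the reflections $\beta_c$, then the presentation with $r=\beta_1\beta_0$, $s=\beta_0$) is essentially the standard argument of the cited source, and it dovetails with the paper's own Theorem \ref{MainResult}, where your translation subgroup $\langle T_2\rangle$ is exactly $\mathrm{Dis}(R_n)$. The only caveat is the degenerate case $n\le 2$, where $R_n$ is the trivial quandle and the count ``$n/2$ reflections plus $n/2$ translations'' double-counts the identity, an edge case the cited statement also glosses over.
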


\begin{theorem}[Thm 2.4 in \cite{AutGroup2010}]
    The inner automorphism group of the conjugation quandle of a group, Inn(Conj($G$)) is isomorphic to $G / Z(G)$ where $Z(G)$ is the center of $G$.
\end{theorem}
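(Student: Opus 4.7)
The plan is to realize $\mathrm{Inn}(\mathrm{Conj}(G))$ as the image of $G$ under the map that sends an element to its $\beta$-action. Concretely, I would define $\varphi : G \to \mathrm{Inn}(\mathrm{Conj}(G))$ by $\varphi(b) = \beta_b$, where by definition $\beta_b(a) = a \triangleright b = bab^{-1}$ for all $a \in G$. Note that $\beta_b$ is precisely conjugation by $b$ viewed as a permutation of the underlying set $G$, so this is the natural candidate for relating inner automorphisms of the quandle $\mathrm{Conj}(G)$ to inner automorphisms of the group $G$ in the usual sense.

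First I would verify $\varphi$ is a group homomorphism. A direct computation gives
\[
\beta_{bc}(a) \;=\; (bc)\,a\,(bc)^{-1} \;=\; b\,(cac^{-1})\,b^{-1} \;=\; \beta_b\bigl(\beta_c(a)\bigr),
\]
so $\varphi(bc) = \varphi(b)\,\varphi(c)$. As an immediate corollary, $\beta_b^{-1} = \beta_{b^{-1}}$, which is crucial for the next step.

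Next I would show $\varphi$ is surjective. By definition $\mathrm{Inn}(\mathrm{Conj}(G)) = \langle \beta_y : y \in G \rangle$, so an arbitrary element has the form $\beta_{b_1}^{\varepsilon_1} \cdots \beta_{b_k}^{\varepsilon_k}$ with $\varepsilon_i = \pm 1$. Using $\beta_b^{-1} = \beta_{b^{-1}}$ we can rewrite this as $\beta_{c_1} \cdots \beta_{c_k}$, and then the homomorphism property collapses it to $\beta_{c_1 \cdots c_k} = \varphi(c_1 \cdots c_k)$. Hence $\varphi$ is surjective.

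Finally I would identify the kernel: $\varphi(b) = \mathrm{id}$ iff $bab^{-1} = a$ for every $a \in G$, which is exactly the condition $b \in Z(G)$. Applying the first isomorphism theorem yields $\mathrm{Inn}(\mathrm{Conj}(G)) \cong G / Z(G)$. The argument is almost entirely formal once the identification $\beta_b = (\text{conjugation by } b)$ is made; the only subtlety is remembering to handle the inverse generators in the surjectivity step, which is resolved by the observation that $\beta_b^{-1} = \beta_{b^{-1}}$. There is no serious obstacle.
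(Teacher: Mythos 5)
Your proposal is correct: the map $b \mapsto \beta_b$ is a surjective homomorphism onto $\mathrm{Inn}(\mathrm{Conj}(G))$ with kernel $Z(G)$, and the first isomorphism theorem finishes it; this is the standard argument, essentially the same as in the cited source (the present paper only quotes the result without proof). The only point worth double-checking is the composition convention, and your computation $\beta_{bc} = \beta_b \circ \beta_c$ does hold with the usual left-to-right function composition, so there is no issue.
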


In 2017, Bardakov, Dey, and Singh generalize MacQuarrie, Elhamdadi and Restrepo's results for dihedral quandles to Takasaki quandles on albelian groups with no 2-torsion. They further prove some stronger results that are not reproduced below.

\begin{theorem}[Thm 4.2 in \cite{BDS_2017_Aut_Conj}]
    If $G$ is an additive abelian group with no 2-torsion, then:
    \begin{enumerate}
        \item $\text{Aut}(T(G)) \cong G \rtimes \text{Aut}(G)$
        \item $\I(T(G)) \cong 2G \rtimes \Z_2$
        
    \end{enumerate}
\end{theorem}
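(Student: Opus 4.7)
The plan is to prove the two isomorphisms by constructing explicit maps and using the defining identity $a\triangleright b = 2b-a$ together with the no 2-torsion hypothesis in exactly one critical spot. For part (1), I would first check that for every $g\in G$ the translation $\tau_g(x)=x+g$ is a quandle automorphism (a one-line computation: $\tau_g(2b-a)=2b-a+g=2(b+g)-(a+g)$), and that every $\phi\in\text{Aut}(G)$ is a quandle automorphism of $T(G)$ by $\Z$-linearity. Then I would define $\Psi:G\rtimes\text{Aut}(G)\to\text{Aut}(T(G))$ by $\Psi(g,\phi)=\tau_g\circ\phi$, verify the semidirect product law $(\tau_g\phi)(\tau_h\psi)=\tau_{g+\phi(h)}(\phi\psi)$, and check injectivity by evaluating at $0$.

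The main obstacle, and the only place no 2-torsion enters the first part, is surjectivity: given $f\in\text{Aut}(T(G))$, set $g=f(0)$ and $\phi=\tau_{-g}\circ f$, so $\phi(0)=0$ and $\phi$ remains a quandle automorphism. I then need to show $\phi$ is a group automorphism. Setting $a=0$ and $b=0$ respectively in $\phi(2b-a)=2\phi(b)-\phi(a)$ gives $\phi(2b)=2\phi(b)$ and $\phi(-a)=-\phi(a)$; substituting $a\mapsto -a$ then gives $\phi(2b+a)=2\phi(b)+\phi(a)$. Applying this identity to $2x+2y$ yields $\phi(2x+2y)=2\phi(x)+2\phi(y)$, while applying the doubling identity gives $\phi(2(x+y))=2\phi(x+y)$. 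Equating these,
\[
2\bigl[\phi(x+y)-\phi(x)-\phi(y)\bigr]=0,
\]
so the absence of 2-torsion forces $\phi$ to be additive; bijectivity is inherited from $f$. Hence $\phi\in\text{Aut}(G)$ and $\Psi$ is surjective.

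For part (2), I would compute directly with generators. Each $\beta_y(x)=2y-x$ is an involution, so $\beta_y^{-1}=\beta_y$, and a direct calculation gives $\beta_y\beta_z=\tau_{2(y-z)}$. Consequently $\text{Dis}(T(G))=\langle\tau_h:h\in 2G\rangle\cong 2G$, while $\beta_y=\tau_{2y}\circ\beta_0$ shows $\I(T(G))=\langle\beta_0,\tau_h:h\in 2G\rangle$. The conjugation relation $\beta_0\tau_h\beta_0=\tau_{-h}$ (from $\beta_0=-\mathrm{id}$) identifies $\I(T(G))$ with the internal semidirect product of the normal subgroup $\{\tau_h:h\in 2G\}\cong 2G$ by $\langle\beta_0\rangle\cong\Z_2$ acting by inversion. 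No 2-torsion enters here except implicitly in identifying $2G$ with a copy of $G$ when desired. The expected difficulty is purely bookkeeping: verifying that $\beta_0\notin\{\tau_h:h\in 2G\}$ so that the extension does not collapse, which follows because $\beta_0$ is not a translation (it is not injective-plus-shift; it reverses signs).
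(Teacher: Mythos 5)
The paper itself does not prove this statement---it is quoted verbatim from Bardakov--Dey--Singh \cite{BDS_2017_Aut_Conj}---so there is no in-paper argument to compare against; judged on its own, your proof is correct and is essentially the standard argument for this theorem. Part (1) is complete: translations and group automorphisms are quandle automorphisms, the map $\Psi(g,\phi)=\tau_g\circ\phi$ is an injective homomorphism from $G\rtimes\mathrm{Aut}(G)$, and for surjectivity the identity $2\bigl[\phi(x+y)-\phi(x)-\phi(y)\bigr]=0$ together with the no 2-torsion hypothesis forces an automorphism fixing $0$ to be additive. Part (2) via $\beta_y=\tau_{2y}\circ\beta_0$, $\beta_y^2=\mathrm{id}$, $\beta_y\beta_z=\tau_{2(y-z)}$, and $\beta_0\tau_h\beta_0=\tau_{-h}$ is also the right route. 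Two points deserve tightening. First, your stated reason that $\beta_0\notin\{\tau_h:h\in 2G\}$ (``it reverses signs'') is not by itself sufficient: if $G$ had exponent $2$, then $-\mathrm{id}$ \emph{is} the trivial translation. The clean argument is that $\beta_0=\tau_h$ forces $h=\beta_0(0)=0$ and hence $2x=0$ for all $x$, which the no 2-torsion hypothesis excludes (for $G\neq 0$); so, contrary to your remark, the 2-torsion hypothesis does enter part (2) precisely at this step, guaranteeing that the $\Z_2$ factor does not collapse. Second, the statement implicitly assumes $G$ nontrivial: for $G=0$ one has $\mathrm{Inn}(T(G))=\{1\}$ while $2G\rtimes\Z_2\cong\Z_2$; that caveat is an issue with the quoted theorem rather than with your argument.
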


In 2019, Bardakov and Singh furthered these results about the Automorphism groups of conjugation quandles with Nasybullov.

\begin{theorem}[Thm 4.8 in \cite{BNS_2019_Aut_Conj}]
    For a finite group $G$, $\A(\text{Conj}(G))) =\A(G) \times S_{|Z(G)|} $ if and only if $Z(G)=1$ or $G=\Z_2$.
\end{theorem}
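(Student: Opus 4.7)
The plan is to prove both directions of the biconditional. The central tool is the observation that for every finite $G$, the group $\A(G)$ embeds into $\A(\text{Conj}(G))$ (since group automorphisms preserve conjugation), and that every permutation $\tau$ of $Z(G)$, extended by the identity on $G \setminus Z(G)$, is also a quandle automorphism. The latter is verifiable by a case split on whether $a, b$ are central, using that $bab^{-1}$ is central iff $a$ is central. These two observations drive both directions of the argument.

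For the ``if'' direction I would handle the two listed cases separately. If $G = \Z_2$, then $\text{Conj}(\Z_2) = T_2$, so $\A(\text{Conj}(\Z_2)) = S_2 \cong 1 \times S_2 = \A(\Z_2) \times S_{|Z(\Z_2)|}$. If $Z(G) = 1$, the right-hand side collapses to $\A(G)$, so the nontrivial task is to prove $\A(\text{Conj}(G)) \subseteq \A(G)$. Given $f \in \A(\text{Conj}(G))$, the quandle identity $f(a \triangleright b) = f(a) \triangleright f(b)$ rearranges to $f \beta_g f^{-1} = \beta_{f(g)}$, where $\beta_g(x) = gxg^{-1}$. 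Combined with $\beta_g \beta_h = \beta_{gh}$, this yields $\beta_{f(gh)} = \beta_{f(g) f(h)}$. Since $Z(G) = 1$ makes $g \mapsto \beta_g$ injective, we conclude $f(gh) = f(g) f(h)$, so $f \in \A(G)$.

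For the ``only if'' direction I would prove the contrapositive via cardinality gaps. If $G$ is abelian with $|G| > 2$, then $\text{Conj}(G) = T_{|G|}$, giving $|\A(\text{Conj}(G))| = |G|!$, whereas $|\A(G) \times S_{|G|}| = |\A(G)| \cdot |G|! > |G|!$ since the only finite abelian groups with trivial automorphism group are $\{1\}$ and $\Z_2$. If $G$ is non-abelian with $Z(G) \neq 1$, I would introduce the ``central twist'' $f_z$ for $z \in Z(G)$, defined by $f_z(g) = gz$ on non-central $g$ and $f_z|_{Z(G)} = \text{id}$; a case analysis (central vs.\ non-central arguments, using $bab^{-1} \in Z(G) \iff a \in Z(G)$) verifies $f_z \in \A(\text{Conj}(G))$. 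Any decomposition $f_z = \varphi \tau$ with $\varphi \in \A(G), \tau \in S_{|Z(G)|}$ would force $\varphi(g) = gz$ on non-central $g$, contradicting multiplicativity via a non-central pair $g, h$ with $gh$ non-central, whose existence follows because otherwise $G/Z(G)$ is cyclic and $G$ is abelian. Since $\A(G) \cap S_{|Z(G)|}$ is trivial in this case (a group automorphism fixing non-central $g$ and non-central $gz$ must fix $z$), the cosets $f_z \cdot (\A(G) \cdot S_{|Z(G)|})$ are distinct as $z$ ranges over $Z(G)$, giving $|\A(\text{Conj}(G))| \geq |Z(G)| \cdot |\A(G)| \cdot |Z(G)|! > |\A(G) \times S_{|Z(G)|}|$.

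The main obstacle I expect is the non-abelian sub-case of the ``only if'' direction: constructing the central twists $f_z$, verifying they are quandle automorphisms not absorbed by the natural embedding, and organizing the coset count to secure a strict cardinality gap. This requires careful bookkeeping of intersections together with the combinatorial fact that non-central elements in a non-abelian group with nontrivial center multiply to yield non-central products, which is where the bulk of the technical work lies.
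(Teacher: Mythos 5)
This statement is quoted in the paper as background (Theorem 4.8 of \cite{BNS_2019_Aut_Conj}); the paper itself gives no proof, so there is nothing internal to compare against, and your argument must be judged on its own. On that basis it is essentially correct and is close in spirit to the standard argument. The two embeddings you use (group automorphisms, and permutations of $Z(G)$ extended by the identity) are indeed quandle automorphisms of $\mathrm{Conj}(G)$; the centerless case follows exactly as you say, since $f\beta_b f^{-1}=\beta_{f(b)}$ and $\beta_b\beta_c=\beta_{bc}$ give $f(bc)f(c)^{-1}f(b)^{-1}\in Z(G)$, which is trivial; the abelian case with $|G|>2$ is killed by $|\mathrm{Aut}(G)|>1$; and your central twists $f_z$ (identity on $Z(G)$, $g\mapsto gz$ off it) are genuine quandle automorphisms that cannot lie in the subgroup generated by $\mathrm{Aut}(G)$ and $\mathrm{Sym}(Z(G))$, because writing $f_z=\varphi\tau$ forces $\varphi(g)=gz$ on non-central elements and then $ghz=ghz^2$ for a non-central pair with non-central product, which exists since otherwise $[G:Z(G)]\le 2$ and $G$ would be abelian. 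Two small points to tidy up: (i) $\mathrm{Aut}(G)$ normalizes but does not centralize the copy of $S_{|Z(G)|}$, so the equality in the statement should be read as an abstract isomorphism (or as equality with the subgroup they generate); your cardinality arguments handle either reading, since they show the orders differ. (ii) The claim that the cosets $f_z\cdot\bigl(\mathrm{Aut}(G)\cdot S_{|Z(G)|}\bigr)$ are pairwise distinct needs the observation $f_zf_w^{-1}=f_{zw^{-1}}$ together with your ``$f_u\notin$ the product unless $u=1$'' argument, not just triviality of $\mathrm{Aut}(G)\cap S_{|Z(G)|}$; in fact a single nontrivial coset already yields $|\mathrm{Aut}(\mathrm{Conj}(G))|\ge 2\,|\mathrm{Aut}(G)|\,|Z(G)|!$, which is all the strict gap you need.
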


\begin{theorem}[Thm 4.9 in \cite{BNS_2019_Aut_Conj}]
    For a finite group $G$, $\A(\text{Conj}(G))) = Z(G) \rtimes \A(G)$ if and only if either $Z(G) = 1$ or $G = \Z_2, \Z_2^2, \text{ or } \Z_3$.
\end{theorem}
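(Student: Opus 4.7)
The plan is to exhibit a canonical embedding $\iota: Z(G) \rtimes \A(G) \hookrightarrow \A(\text{Conj}(G))$ and then determine precisely when it is onto. For $z \in Z(G)$, the right-translation $\phi_z(a) = az$ lies in $\A(\text{Conj}(G))$ because $(bz)(az)(bz)^{-1} = bab^{-1}z$ by centrality, and every group automorphism of $G$ preserves conjugation and so is automatically a quandle automorphism of Conj($G$). A direct check of the relation $\psi \phi_z \psi^{-1} = \phi_{\psi(z)}$ shows that $\A(G)$ normalizes the copy of $Z(G)$ and makes $\iota: (z,\psi) \mapsto \phi_z \circ \psi$ an injective group homomorphism.

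For the ``if'' direction, the cases $G \in \{\Z_2, \Z_2^2, \Z_3\}$ are finite verifications: each such $G$ is abelian, so Conj$(G) = T_{|G|}$ and $\A(\text{Conj}(G)) = S_{|G|}$, and one checks $|Z(G) \rtimes \A(G)| = |G| \cdot |\A(G)| = |G|!$ in each instance, forcing $\iota$ to be onto. For $Z(G) = 1$, I would first note the quandle-invariant description $Z(G) = \{z : a \triangleright z = a \text{ for all } a\}$, from which every $\phi \in \A(\text{Conj}(G))$ fixes the identity. The normality of $\I(\text{Conj}(G))$ inside $\A(\text{Conj}(G))$ gives $\phi \beta_b \phi^{-1} = \beta_{\phi(b)}$; combining this with the quoted isomorphism $\I(\text{Conj}(G)) \cong G/Z(G) = G$ shows that $\phi$, viewed as a self-map of $G$, is a group automorphism, so $\phi \in \A(G) = Z(G) \rtimes \A(G)$.

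For the ``only if'' direction I would split into a non-abelian and an abelian case. When $G$ is non-abelian with $|Z(G)| \ge 2$, pick distinct $z_1, z_2 \in Z(G)$ and let $\tau$ be the transposition $(z_1\ z_2)$ on $G$ fixing every element of $G \setminus Z(G)$. A four-way case analysis on whether $a$ and $b$ lie in $Z(G)$, using that any conjugate of a non-central element is non-central, verifies $\tau \in \A(\text{Conj}(G))$. Conversely, suppose $(z,\psi) \in Z(G) \rtimes \A(G)$ acts as $a \mapsto \psi(a)z$ and fixes $G \setminus Z(G)$ pointwise; then $\psi(a) = az^{-1}$ there, and the standard fact that $G/Z(G)$ is never a nontrivial cyclic group gives $[G:Z(G)] \ge 4$, so one can find $a,b \notin Z(G)$ with $ab \notin Z(G)$, and multiplicativity of $\psi$ on this triple forces $z = e$ and then $\psi = \text{id}$ (using that $G \setminus Z(G)$ generates any non-abelian $G$). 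Thus $\tau$ witnesses $\A(\text{Conj}(G)) \supsetneq Z(G) \rtimes \A(G)$. When instead $G$ is abelian with $|G| \ge 4$ and $G \ne \Z_2^2$, Conj$(G) = T_{|G|}$ gives $|\A(\text{Conj}(G))| = |G|!$ while $|Z(G) \rtimes \A(G)| = |G| \cdot |\A(G)|$, so equality would force $\A(G) = S_{|G|-1}$ as a subgroup of the permutation group on $G \setminus \{e\}$; since $\A(G)$ preserves element orders, $G$ must be elementary abelian $(\Z_p)^k$, and comparing $|GL_k(\Z_p)|$ with $(p^k - 1)!$ leaves only $G \in \{\Z_2, \Z_3, \Z_2^2\}$, all excluded by hypothesis.

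I expect the main obstacle to be the non-abelian subcase of the ``only if'' direction, both the bookkeeping in verifying that the exotic transposition $\tau$ respects $\triangleright$ in every combination of memberships in $Z(G)$, and the argument that the only element of $Z(G) \rtimes \A(G)$ fixing $G \setminus Z(G)$ pointwise is the identity; both steps rest on the standard group-theoretic facts that $G/Z(G)$ is never a nontrivial cyclic group and that $G \setminus Z(G)$ generates $G$ whenever $G$ is non-abelian.
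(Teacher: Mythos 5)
The paper you are annotating does not prove this statement at all; it is quoted verbatim as Theorem 4.9 of Bardakov--Nasybullov--Singh \cite{BNS_2019_Aut_Conj}, so there is no internal proof to compare against. Judged on its own terms, your argument is essentially correct and follows the natural route (which is also the spirit of the cited source): the embedding $\iota(z,\psi)=\phi_z\circ\psi$ is a well-defined injective homomorphism onto the natural copy of $Z(G)\rtimes\A(G)$; for $Z(G)=1$ the relation $\phi\beta_b\phi^{-1}=\beta_{\phi(b)}$ together with the injectivity of $b\mapsto\beta_b$ identifies every quandle automorphism with a group automorphism; for $\Z_2,\Z_3,\Z_2^2$ the count $|G|\cdot|\A(G)|=|G|!$ forces surjectivity; for non-abelian $G$ with $|Z(G)|\ge 2$ your central transposition $\tau$ does respect $\triangleright$ (conjugates of non-central elements are non-central, and central elements are fixed by every $\beta_b$), and your exclusion of $\tau$ from the image works: the needed $a,b\notin Z(G)$ with $ab\notin Z(G)$ exist because otherwise $G\setminus Z(G)$ would lie in a single coset of $Z(G)$, giving $[G:Z(G)]\le 2$ and $G$ abelian, and $G\setminus Z(G)$ generates $G$ since no group is a union of two proper subgroups; in the abelian case your reduction to elementary abelian groups and the comparison of $|\mathrm{GL}_k(\Z_p)|$ with $(p^k-1)!$ (routine, but worth writing out the estimate) leaves exactly $\Z_2,\Z_3,\Z_2^2$. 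The one point you should state explicitly is the reading of the equality: your proof establishes that the \emph{natural subgroup} $Z(G)\rtimes\A(G)\le\A(\mathrm{Conj}(G))$ equals the whole group exactly in the listed cases, which is the intended meaning in \cite{BNS_2019_Aut_Conj}; it does not address the weaker reading of ``$=$'' as an abstract isomorphism.
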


In 2024, Cazet posted a pre-print to arXiv which will appear in the \textit{Journal of Algebra and Its Applications} concerning various aspects of one non-trivial column quandles. He includes a result about their automorphism and inner automorphism groups.

\begin{theorem}[Thm 3.4 in \cite{cazet2024quandlesnontrivialcolumn}]
    If $\sigma \ne ()$, then $\text{Aut}(P_{n}^{\sigma}) \cong C_{S_n}(\sigma)$ and $\text{Inn}(P_{n}^{\sigma}) \cong \Z_{|\sigma|}$ where $C_{S_n}(\sigma)$ is the centralizer of $\sigma$ and $|\sigma|$ is the order of $\sigma$.
\end{theorem}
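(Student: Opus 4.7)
The plan is to dispatch both isomorphisms by computing the column maps $\beta_y$ directly from the definition of $P_n^\sigma$. Since $\beta_0(x) = \sigma(x)$ and $\beta_y(x) = x$ for every $y \ne 0$, we immediately get $\I(P_n^\sigma) = \langle \beta_y : y \in X \rangle = \langle \sigma \rangle$, which is cyclic of order $|\sigma|$, settling the second claim.

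For the automorphism group, I would test the homomorphism condition $\phi(x \triangleright y) = \phi(x) \triangleright \phi(y)$ against the two types of columns. Substituting $y = 0$ gives $\phi(\sigma(x)) = \phi(x) \triangleright \phi(0)$, and the key move is a case split on $\phi(0)$: if $\phi(0) \ne 0$, the right side collapses to $\phi(x)$, which forces $\sigma = ()$ and contradicts the hypothesis, so we must have $\phi(0) = 0$ and hence $\phi \sigma = \sigma \phi$. Substituting any $y \ne 0$ gives $\phi(x) = \phi(x) \triangleright \phi(y)$, which now holds automatically because $\phi(0) = 0$ and the bijectivity of $\phi$ force $\phi(y) \ne 0$. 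This shows that $\A(P_n^\sigma)$ consists exactly of those $\phi \in S_n$ that fix $0$ and commute with $\sigma$, and the converse (that every such $\phi$ is a quandle automorphism) follows by reversing these computations.

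The final step is to identify this subgroup with $C_{S_n}(\sigma)$. Since any $\phi$ commuting with $\sigma$ must send $0$ to a fixed point of $\sigma$, my characterization realizes $\A(P_n^\sigma)$ as the stabilizer of $0$ inside $C_{S_n}(\sigma)$, and the claimed isomorphism follows by matching Cazet's convention for how the centralizer is indexed on $\Z_n$ with its distinguished element $0$. This matching is the main obstacle: one has to ensure that the ambient symmetric group in $C_{S_n}(\sigma)$ is read consistently with the condition $\phi(0) = 0$, for instance by viewing $\sigma$ as a permutation of $X \setminus \{0\}$ extended by $\sigma(0) = 0$ so that the relevant centralizer already lives inside the stabilizer subgroup. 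Once this bookkeeping is pinned down, the isomorphism $\A(P_n^\sigma) \cong C_{S_n}(\sigma)$ is immediate from the characterization above.
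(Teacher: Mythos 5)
Your computation of $\text{Inn}(P_n^{\sigma})=\langle\sigma\rangle\cong\Z_{|\sigma|}$ is correct, and so is your characterization of the automorphisms: $\phi$ is an automorphism iff $\phi(0)=0$ and $\phi\sigma=\sigma\phi$, with the hypothesis $\sigma\ne()$ used exactly where you use it, and the converse by reversing the computation. This is the natural argument (note the paper quotes this theorem from Cazet without proof, so there is no in-paper proof to compare against).

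The problem is the last step, which you defer to ``bookkeeping'': it is not bookkeeping, and under this paper's conventions it cannot be pinned down in the way you suggest. What you have actually proved is that $\text{Aut}(P_n^{\sigma})$ is the stabilizer of $0$ inside the centralizer, equivalently the centralizer of $\sigma|_{X\setminus\{0\}}$ inside $\mathrm{Sym}(X\setminus\{0\})\cong S_{n-1}$. That group is in general \emph{not} isomorphic to the full centralizer $C_{S_n}(\sigma)$ taken in $\mathrm{Sym}(X)$: take $n=4$ and $\sigma=(1\,2)$, which fixes $0$ and $3$; then $\text{Aut}(P_4^{(1\,2)})=\{(),(1\,2)\}\cong\Z_2$, while $C_{S_4}((1\,2))=\langle(1\,2),(0\,3)\rangle\cong\Z_2\times\Z_2$, and indeed $(0\,3)$ centralizes $\sigma$ but is not a quandle automorphism, precisely by your own $\phi(0)\ne 0$ case. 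More generally, if $\sigma$ has $m\ge 2$ fixed points the two groups differ in order by a factor of $m$. The stated isomorphism is true only under Cazet's indexing, where $\sigma$ permutes the elements other than the distinguished one (his quandle has one more element than the degree of $\sigma$), so that ``$C_{S_n}(\sigma)$'' already denotes a centralizer inside the stabilizer of the distinguished point; with $P_n^{\sigma}$ defined on $\Z_n$ as in this paper, the correct conclusion of your argument is $\text{Aut}(P_n^{\sigma})\cong C_{S_{n-1}}(\sigma)$, the centralizer in the symmetric group on the nonzero elements, obtained via the restriction map $\phi\mapsto\phi|_{X\setminus\{0\}}$ (well defined and bijective onto that centralizer because $\phi(0)=0$). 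You should prove that identification explicitly rather than assert that the isomorphism with $C_{S_n}(\sigma)$ is immediate, since under the literal reading it is false.
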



\section{New Results Concerning the Displacement, Inner Automorphism, and Automorphism Groups of Quandles}

Although the following proposition is trivial and generally known, it has yet to be explicitly stated in the literature, so we produce it here.
\begin{proposition}
    For the trivial quandle $T_n$, $\text{Inn}(T_n) \cong \{1\}$, the trivial group. Furthermore, $\text{Dis}(T_n) \cong \{1\}$ as well.
\end{proposition}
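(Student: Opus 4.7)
The plan is to work directly from the definitions of $\text{Inn}$ and $\text{Dis}$ together with the defining relation of the trivial quandle. Recall that in $T_n$ we have $x \triangleright y = x$ for all $x, y \in \Z_n$, so the map $\beta_y : T_n \to T_n$ given by $\beta_y(x) = x \triangleright y = x$ is just the identity permutation on $\Z_n$, regardless of the choice of $y$.

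From here the first claim is immediate: since every generator of $\text{Inn}(T_n)$ is the identity, we have
\[
\text{Inn}(T_n) = \langle \beta_y : y \in T_n \rangle = \langle \text{id} \rangle \cong \{1\}.
\]
For the displacement group, I would give two equally short arguments and let the reader take whichever they prefer. The direct route is to observe that each generator $\beta_x \beta_y^{-1}$ of $\text{Dis}(T_n)$ equals $\text{id} \cdot \text{id} = \text{id}$, so $\text{Dis}(T_n) = \langle \text{id} \rangle \cong \{1\}$. Alternatively, one can invoke \HLink{Proposition}{prop:transv_subset_inn}, which gives $\text{Dis}(T_n) \trianglelefteq \text{Inn}(T_n) \cong \{1\}$, forcing $\text{Dis}(T_n) \cong \{1\}$ as well.

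There is no real obstacle here; the only thing to be careful about is to note explicitly that the $\beta_y$ are the identity \emph{as permutations}, so that the generating sets for both $\text{Inn}(T_n)$ and $\text{Dis}(T_n)$ consist solely of the identity element of $S_n$, and hence generate the trivial subgroup.
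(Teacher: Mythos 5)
Your argument is correct and matches the paper's proof: both observe that every $\beta_y$ is the identity permutation, so $\text{Inn}(T_n)$ is trivial, and both conclude triviality of $\text{Dis}(T_n)$ from its containment in $\text{Inn}(T_n)$ (your extra direct computation of the generators $\beta_x\beta_y^{-1}$ is a harmless bonus). Nothing is missing.
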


\begin{proof}
As $T_n$ is trivial, $\beta_i$ is the identity permutation for all $i\in \{1,2,\ldots,n\}$. Thus $\text{Inn}(T_n)$, generated by the set of $\beta_i$, is trivial.
\par Because $\text{Dis}(T_n)\subseteq \text{Inn}(T_n)$, it follows that $\text{Dis}(T_n)$ is also trivial.
\end{proof}

\begin{theorem}\label{MainResult}
    The displacement group of the dihedral quandle, $R_n$, is isomorphic to $Z_n$ when $n$ is odd and $\Z_{\frac{n}{2}}$ when $n$ is even.\\
\end{theorem}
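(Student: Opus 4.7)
The plan is to exploit the fact that in the dihedral quandle every $\beta_y$ is an involution, so that $\text{Dis}(R_n) = \langle \beta_x \beta_y^{-1} \rangle$ simplifies to $\langle \beta_x \beta_y \rangle$, and each such composition turns out to be a pure translation on $\mathbb{Z}_n$. This should reduce the whole question to identifying which translations are generated.

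More concretely, first I would verify that $\beta_y(x) = 2y - x$ satisfies $\beta_y^2 = \text{id}$, so $\beta_y^{-1} = \beta_y$. Then I would compute $\beta_x \beta_y(z) = 2x - (2y - z) = z + 2(x-y) \pmod{n}$, showing that $\beta_x \beta_y$ is the translation $\tau_{2(x-y)}$ by $2(x-y)$. Since the set of translations $\{\tau_a : a \in \mathbb{Z}_n\}$ is a group isomorphic to $\mathbb{Z}_n$ under composition, and every generator $\beta_x \beta_y^{-1}$ of $\text{Dis}(R_n)$ lives in this translation subgroup, $\text{Dis}(R_n)$ is isomorphic to the additive subgroup of $\mathbb{Z}_n$ generated by $\{2(x - y) : x, y \in \mathbb{Z}_n\} = 2\mathbb{Z}_n$.

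It then remains to identify $2\mathbb{Z}_n$ by cases. When $n$ is odd, $\gcd(2, n) = 1$ so $2\mathbb{Z}_n = \mathbb{Z}_n$, giving $\text{Dis}(R_n) \cong \mathbb{Z}_n$. When $n$ is even, $2\mathbb{Z}_n = \{0, 2, 4, \ldots, n-2\}$ is cyclic of order $n/2$ generated by $2$, so $\text{Dis}(R_n) \cong \mathbb{Z}_{n/2}$.

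There is no real obstacle here; the mild subtlety is making sure one does not miss any elements of $\text{Dis}(R_n)$ by restricting attention to two-factor products $\beta_x \beta_y$. This is handled by the observation that any element of $\text{Dis}(R_n)$ has the form $\beta_{x_1}^{e_1} \cdots \beta_{x_k}^{e_k}$ with $\sum e_i = 0$; since each $\beta_{x_i}$ is an involution, we may assume $e_i \in \{0, 1\}$ with an even number of $1$'s, and thus any such product is a product of pairs of the form $\beta_{x_i}\beta_{x_j}$, which we have already shown are translations. Hence the computation of the subgroup of $\mathbb{Z}_n$ generated by the shifts $2(x-y)$ captures all of $\text{Dis}(R_n)$.
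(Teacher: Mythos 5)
Your proposal is correct and follows essentially the same route as the paper: both identify $\beta_x\beta_y^{-1}=\beta_x\beta_y$ as the translation $z\mapsto z+2(x-y)$ on $\mathbb{Z}_n$ and then recognize the group generated by these even shifts as $\mathbb{Z}_n$ for odd $n$ and $\mathbb{Z}_{n/2}$ for even $n$. Your finish via the subgroup $2\mathbb{Z}_n$ and the $\gcd$ observation is a slightly cleaner packaging than the paper's explicit computation of powers of a chosen generating cycle, but the underlying idea is the same.
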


\begin{proof}

    In $R_n$, each $\beta_y$ corresponds to the permutation
    $$(y-1, y+1)(y-2, y+2)(y-2) \cdots (y- \lfloor \tfrac{n}{2} \rfloor, y+ \lfloor \tfrac{n}{2} \rfloor)$$ where everything is taken $\mod{n}$.
    
    Consider then $\beta_i\beta_j$. This sends $ k \to k-2a$ where $k \in [n]$ and $a = j-i \mod{n}$. 
    
    Let $\alpha_a = \beta_i\beta_j$. This is the identity function when $i=j$ and is a permutation with no fixed points when $i\ne j$.

     Let $k, b \in [n]$ so that $\alpha_b(k)=k-2b$ and $(\alpha_b)^p(k)=k-2pb$ for $p \in \Z$. \\

    \noindent
    Case 1: \textit{$n$ is odd}\\
    
    First, choose $a=\frac{n-1}{2}$. Thus, $\alpha_{a}$ corresponds to the permutation $(123\cdots n)$. We are going to show that all $\alpha_b$ correspond to $(12\dots n)^l$ for some $l \in \Z$.\\

    \noindent
    Notice $(\alpha_{\frac{n-1}{2}})^{n-2b}(k) = k - 2(n-2b)(\frac{n-1}{2}) \mod{n}$\\
    \hspace*{1.46in}    = $k - n(n-1-2b) - 2b \mod{n}$\\
     \hspace*{1.46in}   = $k - 2b \mod{n}$\\
     \hspace*{1.46in}   = $\alpha_b(k)$.

    Thus, for all $b \in [n]$, $\alpha_b(k)$ = $(\alpha_{\frac{n-1}{2}})^{n-2b}(k)$, so each $\beta_i\beta_j$ corresponds to the permutation $(12 \dots n)^{n-2b}$. Hence, Dis($R_n$) = $\Z_n$.\\

    \noindent
    Case 2: \textit{$n$ is even}\\
    Note that choosing $j=1$ and $i = \frac{n+4}{2} \pmod{n}$ corresponds to the permutation $(1357 \dots n-1)(2468 \dots n)$. We are going to show that all $\beta_i\beta_j$ correspond to $[(1357 \dots n-1)(2468 \dots n)]^{l}$ for some $l \in \Z$.\\

    \noindent
    Notice $(\alpha_{\frac{n-2}{2}})^{n-b}(k) = k - 2(n-b)(\frac{n-2}{2}) \mod{n}$\\
   \hspace*{1.405in}    = $k - n(n-2-b) - 2b \mod{n}$\\ 
    \hspace*{1.405in}   = $k - 2b \mod{n}$\\
     \hspace*{1.405in}   = $\alpha_b(k)$.

Thus, each $\beta_i\beta_j$ corresponds to the permutation $[(1357 \dots n-1)(2468 \dots n)]^{n-b}$ and $\langle (1357 \dots n-1)(2468 \dots n) \rangle$ is isomorphic to $\Z_{\frac{n}{2}}$.
    
\end{proof}

\begin{proposition}
    The displacement group of a one non-trivial column quandle is the same as its inner automorphism group, that is, $\text{Dis}(P_{n}^{\sigma}) = \text{Inn}(P_{n}^{\sigma})$
\end{proposition}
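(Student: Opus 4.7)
The plan is to show containment in both directions, the easy direction being Proposition \ref{prop:transv_subset_inn}, and the nontrivial direction exploiting the fact that every column of $P_n^\sigma$ other than column $0$ is trivial.

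First I would unpack the definition of $P_n^\sigma$ at the level of the maps $\beta_y$. For $y = 0$ we have $\beta_0(x) = x \triangleright 0 = \sigma(x)$, so $\beta_0$ is the permutation $\sigma$. For every $y \neq 0$ we have $\beta_y(x) = x \triangleright y = x$, so $\beta_y$ is the identity permutation. Consequently $\text{Inn}(P_n^\sigma) = \langle \sigma \rangle$.

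Next I would handle the two inclusions. The containment $\text{Dis}(P_n^\sigma) \subseteq \text{Inn}(P_n^\sigma)$ is immediate from Proposition \ref{prop:transv_subset_inn}. For the reverse containment, since $\text{Inn}(P_n^\sigma)$ is generated by $\{\beta_y : y \in X\}$, it suffices to show each generator lies in $\text{Dis}(P_n^\sigma)$. For $y \neq 0$ we have $\beta_y = \text{id}$, which trivially lies in $\text{Dis}(P_n^\sigma)$. For $y = 0$, we pick any $z \neq 0$ (possible since the quandle has order $n \geq 2$; the $n=1$ case is vacuous as $\sigma$ must equal the identity there) and write $\beta_0 = \beta_0 \beta_z^{-1}$, which is one of the standard generators of $\text{Dis}(P_n^\sigma)$. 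Hence $\text{Inn}(P_n^\sigma) \subseteq \text{Dis}(P_n^\sigma)$, and we are done.

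There is no real obstacle here; the argument is a short observation. In fact, the same reasoning gives the stronger statement announced in the introduction: whenever a quandle $X$ has at least one $y \in X$ with $\beta_y = \text{id}$ (i.e.\ a trivial column), every $\beta_x$ can be rewritten as $\beta_x\beta_y^{-1} \in \text{Dis}(X)$, forcing $\text{Inn}(X) = \text{Dis}(X)$. The proposition stated here is simply the special case in which all but one column is trivial.
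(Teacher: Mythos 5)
Your proof is correct, and it reaches the conclusion by a slightly different route than the paper's. The paper's proof enumerates all the displacement generators $\beta_i\beta_j^{-1}$ of $\text{Dis}(P_n^\sigma)$ directly, observing in three cases that each such product is the identity, $\beta_0$, or $\beta_0^{-1}$ (the paper writes $\beta_1$ for the non-trivial column), and concludes $\text{Dis}(P_n^\sigma)=\langle\beta_0\rangle=\text{Inn}(P_n^\sigma)\cong\Z_{|\sigma|}$, so the explicit generator and the isomorphism type come out of the same computation. You instead argue by double inclusion: one direction is Proposition \ref{prop:transv_subset_inn}, and the other uses only the existence of a trivial column to rewrite $\beta_0=\beta_0\beta_z^{-1}$ for any $z\neq 0$. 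That is precisely the argument the paper gives afterwards for Theorem \ref{thm:transv_equal_inn}, of which, as you note, this proposition is the special case where all but one column is trivial; your version buys generality (and cleaner indexing, including the $n=1$ remark), while the paper's direct enumeration additionally exhibits $\text{Dis}(P_n^\sigma)$ as the cyclic group $\langle\sigma\rangle\cong\Z_{|\sigma|}$ without appealing to Cazet's computation of $\text{Inn}(P_n^\sigma)$.
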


\begin{proof}
    Let $P_{n}^{\sigma}$ be a one non-trivial column quandle. We know from the definition that $\text{Dis}(P_{n}^{\sigma})$ is generated by $\beta_i\beta_j^{-1}$ where $i,j \in {1, \dots, n}$. There are three cases:\\
    1. If $i,j \ne 0$ or $i,j = 1$, then $\beta_i\beta_j^{-1} = ()$.\\
    2. If $i=1$, $j \ne 1$, then $\beta_i\beta_j^{-1} = \beta_1$.\\
    3. If $i \ne 1$, $j = 1$, then $\beta_i\beta_j^{-1} = \beta_1^{-1}$.\\
    Thus, $\text{Dis}(P_{n}^{\sigma}) = \langle \beta_1 \rangle$ which is isomorphic to $\Z_{|\sigma|}$ and hence, $\text{Inn}(P_{n}^{\sigma})$.
\end{proof}

\begin{theorem}\label{thm:transv_equal_inn}
    If a quandle $X$ has a trivial column, then $\text{Dis}(X) = \text{Inn}(X)$.
\end{theorem}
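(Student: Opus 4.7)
The plan is to prove the two inclusions $\mathrm{Dis}(X) \subseteq \mathrm{Inn}(X)$ and $\mathrm{Inn}(X) \subseteq \mathrm{Dis}(X)$ separately. The first inclusion is free: it is immediate from \HLink{Proposition}{prop:transv_subset_inn}, which already tells us that $\mathrm{Dis}(X)$ is a (normal) subgroup of $\mathrm{Inn}(X)$ for any quandle. So the real content is the reverse inclusion, and this is where I will use the trivial column hypothesis.

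First, I would unpack what the hypothesis actually says. Saying column $y$ of the presentation matrix is trivial means $x \triangleright y = x$ for every $x \in X$, which is exactly the statement that the inner automorphism $\beta_y$ is the identity permutation on $X$. Thus we have one distinguished element $y \in X$ for which $\beta_y = \mathrm{id}$, hence also $\beta_y^{-1} = \mathrm{id}$.

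Next, I would pick an arbitrary generator of $\mathrm{Inn}(X)$, namely $\beta_x$ for some $x \in X$, and write the trivial identity
\[
\beta_x = \beta_x \cdot \mathrm{id} = \beta_x \beta_y^{-1}.
\]
The right-hand side is a generator of $\mathrm{Dis}(X)$ by definition, so $\beta_x \in \mathrm{Dis}(X)$. Since every generator of $\mathrm{Inn}(X)$ lies in $\mathrm{Dis}(X)$, the subgroup inclusion $\mathrm{Inn}(X) \subseteq \mathrm{Dis}(X)$ follows, and combined with the reverse inclusion we conclude $\mathrm{Dis}(X) = \mathrm{Inn}(X)$.

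There is no genuine obstacle here; the argument is essentially a one-line observation once the trivial column is translated into the statement $\beta_y = \mathrm{id}$. If anything, the only thing to be careful about is the convention on which index in $x \triangleright y$ labels ``columns,'' so I would make sure the definition used matches the one in the presentation matrix earlier in the paper (where the second argument indexes columns), and then the identification of a trivial column with $\beta_y = \mathrm{id}$ is immediate. This also recovers the previous proposition on $P_n^\sigma$ as a special case, since such quandles have $n-1$ trivial columns.
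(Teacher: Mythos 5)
Your proposal is correct and follows essentially the same route as the paper's proof: identifying the trivial column with $\beta_j = \mathrm{id}$, writing each $\beta_i = \beta_i\beta_j^{-1}$ to get $\mathrm{Inn}(X) \subseteq \mathrm{Dis}(X)$, and citing Proposition~\ref{prop:transv_subset_inn} for the reverse inclusion. No further changes are needed.
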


\begin{proof}
    Let the $j$-th column be trivial. Then for all $i$, $\beta_i \beta_j^{-1} = \beta_i$. That is to say, each $\beta_i$ is included in the set of generators for $\text{Dis}(X)$.\\
    Because $\text{Inn}(X)$ is generated solely by the $\beta_i$, we have that $\text{Dis}(X) \supseteq \text{Inn}(X)$. But by Proposition \ref{prop:transv_subset_inn}, $\text{Dis}(X) \subseteq \text{Inn}(X)$ for any quandle, and we acquire $\text{Dis}(X) = \text{Inn}(X)$.
\end{proof}

\begin{remark}
    Notice that Theorem \ref{thm:transv_equal_inn} holds for infinite quandles, by the same proof.
\end{remark}


\section{Computing the Displacement, Inner Automorphism, and Automorphism Groups of Quandles}

As stated previously, an enumeration of all quandles up to order 13 is provided by \cite{Vor_Yang_2019}. Data for this is provided by Vojtěchovský on his personal website \cite{QuandleData} that encodes each quandle as an ordered list of permutations in cycle notation, corresponding to the columns of the quandle matrix. This data is in the GAP software syntax.

We initially began our computation for Dis($X$), using Octave, then Python, before finally settling on Sage. These computations are deeply connected to finding subgroups of the symmetric group which Sage has an easier time handling.

We reconfirm some results of \cite{AutGroup2010} by calculating the automorphism and inner automorphism groups of all quandles of order $\le$ 5. This is done by first converting the GAP data into a list of permutation group elements in Sage, corresponding to the columns of the quandle matrix. These permutations were used to generate the inner automorphism group using the \verb|PermutationGroup()| function from Sage. The resulting groups were then checked against a large list of groups using the \verb|.is_isomorphic()| Sage function, to identify the isomorphism class of each.

The displacement groups were calculated in a similar fashion to the inner automorphism groups. From the list of permutations representing the quandle matrix columns, elements were composed pairwise to acquire a set of generators for the displacement group, specifically the set $\{ a\;b^{-1} | a,b\in C\}$, for $C$ the set of permutations. This set was used as the generators for the displacement group, and the resulting groups were then identified with their isomorphism class by the same method as above.

To find the automorphism group of each quandle, we first constructed the quandle matrix, using the permutations from the GAP data to create each column. Then, using the \verb|SymmetricGroup()| function in Sage, we checked if each element of the symmetric group was an automorphism by multiplying it to the quandle matrix.

Then, having acquired a list of valid automorphisms, the \verb|PermutationGroup()| Sage function was again used to generate the automorphism group, and the isomorphism class of this group was found in the same manner as above.


\section*{Acknowledgments}

We would like to thank Jennifer Schultens for her help with edits. We are thankful to the many mathematicians who have made packages involving symmetric group subgroup computations available in Sage. We would also like to thank Jennifer Brown for first bringing Quandles to UC Davis.



\BibLatexMode{\printbibliography}

\appendix

\section{Tables}

We give presentation matrices and displacement groups for all quandles of orders 1, 2, 3, 4 and 5.

\begin{center}
\begin{tabular}{ |c|c| } 
\hline
Quandle, X & Dis($X$) \\
\hline
\multirow{3}{3em}{$\begin{bmatrix}
    1 \\
\end{bmatrix}$} & \\ & $\{ 1 \}$ \\ & \\
\hline

\multirow{3}{4em}{$\begin{bmatrix}
    1 & 1\\
    2 & 2\\
\end{bmatrix}$} & \\ & $\{ 1 \}$ \\ & \\
\hline
&\\

\multirow{2}{4em}{$\begin{bmatrix}
    1 & 1 & 1\\
    2 & 2 & 2\\
    3 & 3 & 3\\
\end{bmatrix}$}
& \\ & $\{ 1 \}$ \\ & \\ & \\
\hline
&\\

\multirow{2}{4em}{$\begin{bmatrix}
    1 & 1 & 1 \\
    3 & 2 & 2 \\
    2 & 3 & 3 \\
\end{bmatrix}$} & \\ & $\Z_2$ \\ & \\ & \\
\hline
&\\

\multirow{3}{4em}{$\begin{bmatrix}
    1 & 3 & 2\\
    3 & 2 & 1\\
    2 & 1 & 3 \\
\end{bmatrix}$} & \\ & $\Z_3$ \\ & \\ & \\
\hline

\end{tabular}
\end{center}

\begin{center}
\begin{tabular}{ |c|c| } 
\hline
Quandle, X & Dis($X$) \\
\hline
&\\

\multirow{4}{6em}{$\begin{bmatrix}
    1 & 1 & 1 & 1 \\
    2 & 2 & 2 & 2 \\
    3 & 3 & 3 & 3 \\
    4 & 4 & 4 & 4 \\
\end{bmatrix}$} & \\ & $\{ 1 \}$ \\ & \\ & \\ & \\
\hline
&\\

\multirow{4}{6em}{$\begin{bmatrix}
    1 & 1 & 1 & 1 \\
    2 & 2 & 2 & 2 \\
    3 & 4 & 3 & 3 \\
    4 & 3 & 4 & 4 \\ 
\end{bmatrix}$} & \\ & $\Z_2$ \\ & \\ & \\ & \\
\hline
&\\

\multirow{5}{6em}{$\begin{bmatrix}
    1 & 1 & 1 & 1 \\
    2 & 2 & 2 & 2 \\
    4 & 4 & 3 & 3 \\
    3 & 3 & 4 & 4 \\
\end{bmatrix}$} & \\ & $\Z_2$ \\ & \\ & \\ & \\
\hline
&\\

\multirow{5}{6em}{$\begin{bmatrix}
    1 & 1 & 1 & 1 \\
    3 & 2 & 2 & 2 \\
    4 & 3 & 3 & 3 \\
    2 & 4 & 4 & 4 \\  
\end{bmatrix}$} & \\ & $\Z_3$ \\ & \\ & \\ & \\
\hline
&\\

\multirow{5}{6em}{$\begin{bmatrix}
    1 & 1 & 2 & 2 \\
    2 & 2 & 1 & 1 \\
    4 & 4 & 3 & 3 \\
    3 & 3 & 4 & 4 \\
\end{bmatrix}$} & \\ & $\Z_2$ \\ & \\ & \\ & \\
\hline
&\\

\multirow{5}{6em}{$\begin{bmatrix}
    1 & 1 & 1 & 1 \\
    2 & 2 & 4 & 3 \\
    3 & 4 & 3 & 2 \\
    4 & 3 & 2 & 4 \\
\end{bmatrix}$} & \\ & $D_3$ \\ & \\ & \\ & \\
\hline
&\\

\multirow{5}{6em}{$\begin{bmatrix}
    1 & 4 & 2 & 3 \\
    3 & 2 & 4 & 1 \\
    4 & 1 & 3 & 2 \\
    2 & 3 & 1 & 4 \\
\end{bmatrix}$} & \\ & $D_2$ \\ & \\ & \\ & \\
\hline

\end{tabular}

\end{center}


\begin{center}
\begin{tabular}{ |c|c| } 
\hline
Quandle, X & Dis($X$) \\
\hline
&\\

\multirow{5}{7em}{$\begin{bmatrix}
1 & 1 & 1 & 1 & 1\\
2 & 2 & 2 & 2 & 2\\
3 & 3 & 3 & 3 & 3\\
4 & 4 & 4 & 4 & 4\\
5 & 5 & 5 & 5 & 5\\
\end{bmatrix}$} & \\ & $\{ 1 \}$ \\ & \\ & \\ & \\ & \\
\hline
&\\

\multirow{5}{7em}{$\begin{bmatrix}
1 & 1 & 1 & 1 & 2\\
2 & 2 & 2 & 2 & 1\\
3 & 3 & 3 & 3 & 3\\
4 & 4 & 4 & 4 & 4\\
5 & 5 & 5 & 5 & 5\\
\end{bmatrix}$} & \\ & $\Z_2$ \\ & \\ & \\ & \\ & \\
\hline
&\\

\multirow{5}{7em}{$\begin{bmatrix}
1 & 1 & 1 & 2 & 2\\
2 & 2 & 2 & 1 & 1\\
3 & 3 & 3 & 3 & 3\\
4 & 4 & 4 & 4 & 4\\
5 & 5 & 5 & 5 & 5\\
\end{bmatrix}$} & \\ & $\Z_2$ \\ & \\ & \\ & \\ & \\
\hline
&\\

\multirow{5}{7em}{$\begin{bmatrix}
1 & 1 & 2 & 2 & 2\\
2 & 2 & 1 & 1 & 1\\
3 & 3 & 3 & 3 & 3\\
4 & 4 & 4 & 4 & 4\\
5 & 5 & 5 & 5 & 5\\
\end{bmatrix}$} & \\ & $\Z_2$ \\ & \\ & \\ & \\ & \\
\hline
&\\

\multirow{5}{7em}{$\begin{bmatrix}
1 & 1 & 1 & 1 & 1\\
3 & 2 & 2 & 2 & 2\\
2 & 3 & 3 & 3 & 3\\
5 & 4 & 4 & 4 & 4\\
4 & 5 & 5 & 5 & 5\\
\end{bmatrix}$} & \\ & $\Z_2$ \\ & \\ & \\ & \\ & \\
\hline
&\\

\multirow{5}{7em}{$\begin{bmatrix}
1 & 1 & 1 & 1 & 1\\
2 & 2 & 2 & 2 & 2\\
3 & 4 & 3 & 3 & 3\\
4 & 5 & 4 & 4 & 4\\
5 & 3 & 5 & 5 & 5\\
\end{bmatrix}$} & \\ & $\Z_3$ \\ & \\ & \\ & \\ & \\
\hline

\end{tabular}
\end{center}

\pagebreak

\begin{center}
\begin{tabular}{ |c|c| } 
\hline
Quandle, X & Dis($X$) \\
\hline
&\\

\multirow{5}{7em}{$\begin{bmatrix}
1 & 1 & 1 & 1 & 1\\
2 & 2 & 2 & 2 & 2\\
4 & 4 & 3 & 3 & 3\\
5 & 5 & 4 & 4 & 4\\
3 & 3 & 5 & 5 & 5\\
\end{bmatrix}$} & \\ & $\Z_3$ \\ & \\ & \\ & \\ & \\
\hline
&\\

\multirow{5}{7em}{$\begin{bmatrix}
1 & 1 & 1 & 1 & 1\\
2 & 2 & 2 & 2 & 2\\
4 & 5 & 3 & 3 & 3\\
5 & 3 & 4 & 4 & 4\\
3 & 4 & 5 & 5 & 5\\
\end{bmatrix}$} & \\ & $\Z_3$ \\ & \\ & \\ & \\ & \\
\hline
&\\

\multirow{5}{7em}{$\begin{bmatrix}
1 & 1 & 1 & 1 & 1\\
2 & 2 & 2 & 3 & 3\\
3 & 3 & 3 & 2 & 2\\
4 & 5 & 5 & 4 & 4\\
5 & 4 & 4 & 5 & 5\\
\end{bmatrix}$} & \\ & $D_2$ \\ & \\ & \\ & \\ & \\
\hline
&\\

\multirow{5}{7em}{$\begin{bmatrix}
1 & 1 & 1 & 1 & 1\\
2 & 2 & 2 & 3 & 3\\
3 & 3 & 3 & 2 & 2\\
5 & 4 & 4 & 4 & 4\\
4 & 5 & 5 & 5 & 5\\
\end{bmatrix}$} & \\ & $D_2$ \\ & \\ & \\ & \\ & \\
\hline
&\\

\multirow{5}{7em}{$\begin{bmatrix}
1 & 1 & 1 & 1 & 1\\
2 & 2 & 2 & 3 & 3\\
3 & 3 & 3 & 2 & 2\\
5 & 5 & 5 & 4 & 4\\
4 & 4 & 4 & 5 & 5\\
\end{bmatrix}$} & \\ & $\Z_2$ \\ & \\ & \\ & \\ & \\
\hline
&\\

\multirow{5}{7em}{$\begin{bmatrix}
1 & 1 & 1 & 1 & 1\\
3 & 2 & 2 & 3 & 3\\
2 & 3 & 3 & 2 & 2\\
5 & 4 & 4 & 4 & 4\\
4 & 5 & 5 & 5 & 5\\
\end{bmatrix}$} & \\ & $D_2$ \\ & \\ & \\ & \\ & \\
\hline

\end{tabular}
\end{center}

\pagebreak

\begin{center}
\begin{tabular}{ |c|c| } 
\hline
Quandle, X & Dis($X$) \\
\hline
&\\

\multirow{5}{7em}{$\begin{bmatrix}
1 & 1 & 1 & 1 & 1\\
3 & 2 & 2 & 3 & 3\\
2 & 3 & 3 & 2 & 2\\
5 & 5 & 5 & 4 & 4\\
4 & 4 & 4 & 5 & 5\\
\end{bmatrix}$} & \\ & $D_2$ \\ & \\ & \\ & \\ & \\
\hline
&\\

\multirow{5}{7em}{$\begin{bmatrix}
1 & 1 & 1 & 1 & 1\\
4 & 2 & 2 & 2 & 2\\
5 & 3 & 3 & 3 & 3\\
3 & 4 & 4 & 4 & 4\\
2 & 5 & 5 & 5 & 5\\
\end{bmatrix}$} & \\ & $\Z_4$ \\ & \\ & \\ & \\ & \\
\hline
&\\

\multirow{5}{7em}{$\begin{bmatrix}
1 & 1 & 1 & 1 & 1\\
2 & 2 & 2 & 2 & 2\\
3 & 3 & 3 & 5 & 4\\
4 & 4 & 5 & 4 & 3\\
5 & 5 & 4 & 3 & 5\\
\end{bmatrix}$} & \\ & $D_3$ \\ & \\ & \\ & \\ & \\
\hline
&\\

\multirow{5}{7em}{$\begin{bmatrix}
1 & 1 & 2 & 2 & 2\\
2 & 2 & 1 & 1 & 1\\
4 & 4 & 3 & 3 & 3\\
5 & 5 & 4 & 4 & 4\\
3 & 3 & 5 & 5 & 5\\
\end{bmatrix}$} & \\ & $\Z_6$ \\ & \\ & \\ & \\ & \\
\hline
&\\

\multirow{5}{7em}{$\begin{bmatrix}
1 & 1 & 2 & 2 & 2\\
2 & 2 & 1 & 1 & 1\\
3 & 3 & 3 & 5 & 4\\
4 & 4 & 5 & 4 & 3\\
5 & 5 & 4 & 3 & 5\\
\end{bmatrix}$} & \\ & $D_3$ \\ & \\ & \\ & \\ & \\
\hline
&\\

\multirow{5}{7em}{$\begin{bmatrix}
1 & 1 & 2 & 2 & 2\\
2 & 2 & 1 & 1 & 1\\
4 & 5 & 3 & 5 & 4\\
5 & 3 & 5 & 4 & 3\\
3 & 4 & 4 & 3 & 5\\
\end{bmatrix}$} & \\ & $D_3$ \\ & \\ & \\ & \\ & \\
\hline

\end{tabular}
\end{center}

\pagebreak

\begin{center}
\begin{tabular}{ |c|c| } 
\hline
Quandle, X & Dis($X$) \\
\hline
&\\

\multirow{5}{7em}{$\begin{bmatrix}
1 & 3 & 5 & 2 & 4\\
5 & 2 & 4 & 1 & 3\\
4 & 1 & 3 & 5 & 2\\
3 & 5 & 2 & 4 & 1\\
2 & 4 & 1 & 3 & 5\\
\end{bmatrix}$} & \\ & $\Z_5$ \\ & \\ & \\ & \\ & \\
\hline
&\\

\multirow{5}{7em}{$\begin{bmatrix}
1 & 1 & 1 & 1 & 1\\
2 & 2 & 5 & 3 & 4\\
3 & 4 & 3 & 5 & 2\\
4 & 5 & 2 & 4 & 3\\
5 & 3 & 4 & 2 & 5\\
\end{bmatrix}$} & \\ & $A_4$ \\ & \\ & \\ & \\ & \\
\hline
&\\

\multirow{5}{7em}{$\begin{bmatrix}
1 & 5 & 4 & 3 & 2\\
3 & 2 & 1 & 5 & 4\\
5 & 4 & 3 & 2 & 1\\
2 & 1 & 5 & 4 & 3\\
4 & 3 & 2 & 1 & 5\\
\end{bmatrix}$} & \\ & $\Z_5$ \\ & \\ & \\ & \\ & \\
\hline
&\\

\multirow{5}{7em}{$\begin{bmatrix}
1 & 4 & 2 & 5 & 3\\
4 & 2 & 5 & 3 & 1\\
2 & 5 & 3 & 1 & 4\\
5 & 3 & 1 & 4 & 2\\
3 & 1 & 4 & 2 & 5\\
\end{bmatrix}$} & \\ & $\Z_5$ \\ & \\ & \\ & \\ & \\
\hline

\end{tabular}
\end{center} 



\end{document}